\apptocmd{\thebibliography}{\raggedright}{}{}
\numberwithin{equation}{section}
\theoremstyle{plain}
\newtheorem{theorem}{Theorem}[section]
\newtheorem{lemma}[theorem]{Lemma}
\theoremstyle{definition}
\newtheorem{defn}[theorem]{Definition}
\theoremstyle{remark}
\newtheorem{rmk}[theorem]{Remark}
\newenvironment{remark}[1][]{\begin{rmk}[#1] \pushQED{}}{\popQED \end{rmk}}
\newtheorem{eg}[theorem]{Example}
\DeclareMathOperator{\Mod}{Mod}
\newcommand\R{\ensuremath{\mathbb{R}}}
\newcommand\Z{\ensuremath{\mathbb{Z}}}
\newcommand\Set[2]{\ensuremath{\left\{\text{#1 $|$ #2}\right\}}}
\newcommand\Figure[4]{
\begin{figure}[t]
\centering
\centerline{\psfig{file=#2,scale=#4}}
\caption{#3}
\label{#1}
\end{figure}}
\DeclareMathOperator{\cl}{cl}
\DeclareMathOperator{\scl}{scl}
\DeclareMathOperator{\oc}{oc}
\newcommand{\p}[1]{{\bf #1.}}
\title{\vspace{-40pt}Surface groups, infinite generating sets, and stable commutator length\vspace{-15pt}}
\author{Dan Margalit\thanks{Supported in part by NSF grants DMS-1510556 and DMS-1811941} \and Andrew Putman\thanks{Supported in part by NSF grants DMS-1737434 and DMS-1811210}}
\date{}
\begin{document}

\vspace{-10pt}
\maketitle

\vspace{-18pt}
\begin{abstract}
\noindent
We give a new proof of a theorem of D.\ Calegari that says that the Cayley graph of a surface group with respect to any generating set lying in finitely many mapping class group orbits has infinite diameter.  This applies, for instance, to
the generating set consisting of all simple closed curves.
\end{abstract}

\section{Introduction}
\label{section:introduction}

The objective of this note is to study the geometry of the Cayley graph of a surface group with respect to certain  infinite generating sets.  We give an alternate proof of a theorem of D.\ Calegari that addresses one
of the first geometric questions one might ask about these Cayley graphs: whether or not their
diameter is infinite.  

Let $\Sigma$ be a closed oriented surface of genus at least $2$ and let $\ast \in \Sigma$ be a base point.  We abbreviate $\pi_1(\Sigma,\ast)$ by $\pi_1(\Sigma)$.  The relative mapping class group $\Mod(\Sigma,\ast)$ is the group of isotopy classes of orientation-preserving diffeomorphisms of $\Sigma$ that fix $\ast$.
The group $\Mod(\Sigma,\ast)$ acts on $\pi_1(\Sigma)$ by automorphisms, and Calegari's theorem is as follows. 

\begin{theorem}[D.\ Calegari, \cite{Calegari}]
\label{theorem:main}
Let $\Sigma$ be a closed oriented surface of genus at least $2$ and let 
$S \subset \pi_1(\Sigma)$ be a generating
set contained in finitely many $\Mod(\Sigma,\ast)$-orbits.  Then the Cayley graph of
$\pi_1(\Sigma)$ with respect to $S$ has infinite diameter.
\end{theorem}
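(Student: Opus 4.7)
The plan is to use stable commutator length ($\scl$) as the main obstruction. I will invoke three standard facts: (a) $\scl$ is invariant under automorphisms of a group, so it is constant on each $\Mod(\Sigma,\ast)$-orbit in $\pi_1(\Sigma)$; (b) $\scl$ is quasi-subadditive, with $\scl(ab) \leq \scl(a) + \scl(b) + 1/2$, a direct consequence of Bavard duality applied to homogeneous quasimorphisms; and (c) $\scl$ is unbounded on $[\pi_1(\Sigma),\pi_1(\Sigma)]$ when the genus of $\Sigma$ is at least $2$, since $\scl([a_1,b_1]) > 0$ and $\scl(g^n) = n\,\scl(g)$.

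The argument proceeds by contradiction. Suppose the Cayley graph has finite diameter $D$. Then every element $g \in \pi_1(\Sigma)$ can be expressed as a product $g = s_1 \cdots s_k$ with $k \leq D$ and $s_i \in S \cup S^{-1}$. By (a), $\scl$ assumes only finitely many values on $S \cup S^{-1}$; call the maximum $M$. Iterating (b) would then yield $\scl(g) \leq DM + (D-1)/2$ uniformly in $g$, contradicting (c).

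The main obstacle is a subtlety in fact (a): since $\pi_1(\Sigma)$ has torsion-free abelianization, $\scl(s) = +\infty$ for every $s \notin [\pi_1(\Sigma),\pi_1(\Sigma)]$, and such $s$ occur in $S$ unavoidably because $S$ generates $\pi_1(\Sigma)$. This renders $M = +\infty$ and the naive bound vacuous. To circumvent it, one focuses directly on specific elements of the commutator subgroup, say $g_n := [a_1,b_1]^n$ with $\scl(g_n) \to \infty$, and, given a short word $g_n = s_1 \cdots s_k$ in $S \cup S^{-1}$, uses the vanishing $\sum \overline{s_i} = 0$ in $H_1(\Sigma;\Z)$ to rewrite $g_n$ as a product of $O(D)$ factors drawn from a finite union of $\Mod$-orbits inside $[\pi_1(\Sigma),\pi_1(\Sigma)]$, at the cost of introducing controlled commutators when rearranging. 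Applying (a) and (b) to this refined decomposition produces the desired finite upper bound on $\scl(g_n)$, yielding the contradiction. Constructing this balanced decomposition---which must track both the $\Mod$-orbit structure and the abelianization-cancellation pattern of the original word---is where I expect the technical heart of the proof to reside.
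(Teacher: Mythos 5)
There is a genuine gap, and it is exactly at the place you flag as ``the technical heart.'' Your facts (a)--(c) are fine, and you correctly notice that the naive bound dies because generators $s \notin [\pi_1(\Sigma),\pi_1(\Sigma)]$ have $\scl(s) = \infty$. But the proposed repair --- regrouping a length-$\leq D$ word so that the homology classes cancel and the new factors lie ``in a finite union of $\Mod$-orbits inside $[\pi_1(\Sigma),\pi_1(\Sigma)]$'' --- is not carried out, and as stated it should fail: the $\Mod(\Sigma,\ast)$-orbit of a product such as $s_i s_j$ is not determined by the orbits of $s_i$ and $s_j$, so the grouped factors land in infinitely many orbits (already for products of two simple closed curves with cancelling homology classes), and invariance of $\scl$ under automorphisms then gives no uniform bound $M$ on them. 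Nothing else in your argument bounds the $\scl$ (inside $\pi_1(\Sigma)$) of these combined factors, so there is no finite upper bound to play off against the unboundedness in (c). In effect, bounding $\scl_{\pi_1(\Sigma)}$ of words of bounded $S$-length is essentially equivalent to what you are trying to prove, so the contradiction never materializes.

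The paper resolves this by changing the group in which $\scl$ is measured: embed $\pi_1(\Sigma)$ into $\Mod(\Sigma,\ast)$ as the point-pushing subgroup. There, the whole of $\pi_1(\Sigma)$ lies in the commutator subgroup, and the $\Mod(\Sigma,\ast)$-orbits on $S$ become conjugacy classes, so the (conjugation-invariant) commutator length of $\Mod(\Sigma,\ast)$ is uniformly bounded on $S \cup S^{-1}$; finite diameter then forces $\scl_{\Mod(\Sigma,\ast)}(h) = 0$ for every $h \in \pi_1(\Sigma)$ (Lemma~\ref{lemma:cayley}), with no need to massage words to fix homology. The price is that one must then exhibit an element of $\pi_1(\Sigma)$ with \emph{positive} $\scl$ in the ambient group $\Mod(\Sigma,\ast)$, which is much harder than your fact (c): by Calegari--Fujiwara this requires a pseudo-Anosov point-push (a filling curve, via Kra) no power of which is conjugate in $\Mod(\Sigma,\ast)$ to its inverse, i.e.\ a curve that no mapping class sends to its reverse; constructing such a curve is the content of Section~\ref{section:nonreverse}. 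So your outline shares the paper's starting philosophy ($\scl$ plus orbit-invariance) but stays inside $\pi_1(\Sigma)$, where the key uniform bound is missing; to complete a proof along the paper's lines you would replace your rewriting step by the passage to $\Mod(\Sigma,\ast)$ and replace (c) by the irreversible filling curve argument.
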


\vspace{-\topsep}
This theorem applies, for instance, to the set $S$ of all simple closed curves in $\pi_1(\Sigma)$, which satisfies the hypothesis of Theorem \ref{theorem:main} by the so-called 
change of coordinates principle; see \cite[\S 1.3]{FarbMargalitPrimer}.  Farb had asked if Theorem \ref{theorem:main} was true in this special case; see \cite{Calegari}.  In fact, the general case follows from this one; see \S \ref{section:remark} below.

Both Calegari's proof and ours make use of the theory of quasi-morphisms.  However, while he constructs appropriate quasi-morphisms directly using the hyperbolic geometry of $\Sigma$, we instead embed $\pi_1(\Sigma)$ into $\Mod(\Sigma,\ast)$ as the point-pushing subgroup and use the theory of stable commutator length on $\Mod(\Sigma,\ast)$.

The idea of using quasimorphisms on $\Mod(\Sigma,\ast)$ to study its point-pushing subgroup already
appeared in work of Brandenbursky--Marcinkowski \cite{BM}, who proved a stronger version of
Theorem \ref{theorem:main}.  Their theorem says that many elements of $\pi_1(\Sigma)$ are undistorted with respect to the word metrics appearing in Theorem \ref{theorem:main}.  We handle the details of the proof differently, and
in particular introduce a novel invariant of immersed closed curves in a surface that is
of independent interest; see \S \ref{section:nonreverse}.

Beyond Theorem \ref{theorem:main}, little is known about the geometry and group theory of $\pi_1(\Sigma)$ with
respect to infinite generating sets.  One exception is a theorem of the second author \cite{PutmanInfinite} that 
gives a complete set of relations between the set of simple closed curves in $\pi_1(\Sigma)$.  A natural next question is whether the Cayley graph of $\pi_1(\Sigma)$ with respect to the generating set consisting of all simple closed curves is $\delta$-hyperbolic.

\p{Outline} In Section~\ref{section:nonreverse} we construct an element $x$ of $\pi_1(\Sigma)$ such that no element of $\Mod(\Sigma,\ast)$ takes $x$ to $x^{-1}$.  There is a well-known inclusion of $\pi_1(\Sigma)$ into $\Mod(\Sigma,\ast)$ called the point-pushing map. Under this inclusion, $x$ corresponds to a pseudo-Anosov mapping class.  
In Section~\ref{section:mainproof} we apply a result of Calegari--Fujiwara to conclude that the stable commutator length of $x$ (as an element of $\Mod(\Sigma,\ast)$) is positive.  On the other hand, we argue that if the Cayley graph in the statement of Theorem~\ref{theorem:main} had finite diameter, then the stable commutator length function on $\pi_1(\Sigma) \leqslant \Mod(\Sigma,\ast)$ would be the zero function.  We close the paper in \S \ref{section:remark} with a discussion of how to derive Theorem~\ref{theorem:main} from the special case where the generating set $S$ consists of all simple closed curves.


\section{An irreversible curve}
\label{section:nonreverse}

The goal of this section is to prove the following lemma.  In its statement, we denote the oriented closed curve
obtained by reversing the orientation of an oriented closed curve $\gamma$ by $\bar \gamma$.
The (unbased) homotopy class of an oriented closed curve $\gamma$ is denoted $[\gamma]$.  
Finally, $\Mod(\Sigma)$ is the mapping class group of
a surface $\Sigma$, i.e.\ the group of isotopy classes of orientation-preserving diffeomorphisms of $\Sigma$.

\begin{lemma}
\label{lemma:noreverse}
Let $\Sigma$ be a closed oriented surface of genus at least $2$ and let $\alpha$ be the oriented closed curve in $\Sigma$ shown in Figure~\ref{figure:fillingcurve}.  Then $[\alpha]$ and $\left[\bar \alpha\right]$ do not lie in the same $\Mod(\Sigma)$-orbit.  
\end{lemma}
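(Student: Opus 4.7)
The plan is to construct a combinatorial invariant of free homotopy classes of oriented filling closed curves on $\Sigma$ that is $\Mod(\Sigma)$-invariant but changes under orientation reversal, and then to verify by direct inspection of Figure~\ref{figure:fillingcurve} that $\alpha$ and $\bar\alpha$ have different values of this invariant.

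First I would place $\alpha$ in minimal self-intersection position. Since $\alpha$ fills $\Sigma$ (evident from the figure), the complement $\Sigma \setminus \alpha$ is a disjoint union of open disks, and the closure of each such disk is a polygon whose edges are arcs of $\alpha$ and whose vertices are self-intersection points of $\alpha$. For each such polygon $P$, orient $\partial P$ using the orientation of $\Sigma$ (so that $P$ lies on the left). For each edge $e$ of $\partial P$, record $+$ if the orientation of $\alpha$ along $e$ agrees with this boundary orientation of $P$, and $-$ otherwise. Reading these signs in cyclic order around $\partial P$ yields a cyclic word $w(P)$ in $\{+,-\}$, well-defined up to cyclic rotation. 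Let $\mathcal{I}(\alpha)$ be the multiset of all such cyclic words, ranging over the complementary polygons.

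Any two minimal-position representatives of $[\alpha]$ are ambiently isotopic (by Hass--Scott), so $\mathcal{I}(\alpha)$ depends only on $[\alpha]$; and by construction any orientation-preserving diffeomorphism taking $\alpha$ to another curve $\beta$ in minimal position induces a bijection between their complementary polygons preserving the cyclic words, so $\mathcal{I}$ is $\Mod(\Sigma)$-invariant. On the other hand, reversing the orientation of $\alpha$ flips the sign of every edge label, so $\mathcal{I}(\bar\alpha)$ is obtained from $\mathcal{I}(\alpha)$ by applying the involution $+\leftrightarrow -$ entrywise to every cyclic word. To conclude $[\alpha]$ and $[\bar\alpha]$ lie in different $\Mod(\Sigma)$-orbits, it therefore suffices to exhibit some asymmetry in $\mathcal{I}(\alpha)$ under this sign-flipping involution, for instance a cyclic word whose multiplicity differs from that of its flip.

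The main obstacle is the verification step: one has to enumerate all complementary polygons of the concrete $\alpha$ in Figure~\ref{figure:fillingcurve}, read off each cyclic word, and point to a specific word $w$ whose multiplicity in $\mathcal{I}(\alpha)$ differs from that of its entrywise flip. This is a finite but potentially delicate check, and the figure must be chosen (and is presumably chosen in the paper) precisely so that this asymmetry is visible. Once this is verified, the argument is complete.
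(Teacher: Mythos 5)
Your overall strategy---find a quantity attached to a minimal-position representative that is preserved by orientation-preserving diffeomorphisms but visibly asymmetric under orientation reversal---is the same as the paper's, but your specific invariant has a genuine well-definedness gap. Hass--Scott does \emph{not} say that two homotopic curves in minimal position are ambiently isotopic; it says they are connected by ambient isotopies \emph{together with Reidemeister III moves} (this is exactly the formulation the paper uses, citing Paterson). Your multiset $\mathcal{I}$ of cyclic sign-words of complementary polygons is not invariant under a Reidemeister III move: when one strand is pushed across the crossing of the other two, the three regions meeting the small triangle along its edges each lose a letter, the three regions meeting it only at its vertices each gain one, and even the triangle's own word can change (for a suitable choice of strand orientations it goes from one $+$ and two $-$ to two $+$ and one $-$). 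So $\mathcal{I}$ is an invariant of the ambient isotopy class, not of the free homotopy class, and the crucial step ``$\phi(\alpha)$ and $\bar\alpha$ are homotopic minimal-position curves, hence have equal $\mathcal{I}$'' fails unless you additionally rule out Reidemeister III moves for this particular curve (e.g.\ by showing no complementary region is an innermost triangle), which you have not done. By contrast, the paper's invariant---the set of homotopy classes of left-turning versus right-turning one-cornered simple subcurves, $\oc_L$ and $\oc_R$---is chosen precisely so that invariance under Reidemeister III moves is a straightforward check.

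There is a second, independent gap: the decisive finite verification is left undone. You would need to certify that the drawn curve is in minimal position at all (the paper does this with the other Hass--Scott result: no singular monogons or singular bigons), enumerate its complementary polygons, and exhibit a cyclic word whose multiplicity in $\mathcal{I}(\alpha)$ differs from that of its entrywise flip. Since Figure~\ref{figure:fillingcurve} was designed to make a \emph{different} asymmetry manifest ($\oc_R(\alpha)=\emptyset$ while $\oc_L(\alpha)\neq\emptyset$), there is no a priori guarantee that your polygon-sign asymmetry is present for this curve; ``presumably chosen in the paper'' does not close this step.
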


In the statement of the lemma and throughout the paper, when we refer to the curve $\alpha$ we are of course referring to any of the infinitely many curves indicated in Figure~\ref{figure:fillingcurve}; there is one such curve for each closed orientable surface of genus at least 2.

\vspace{-\topsep}
Our strategy for proving Lemma~\ref{lemma:noreverse} is to introduce an invariant for homotopy classes of oriented immersed closed curves in a surface (the homotopy classes of left-turning one-cornered simple subcurves) and to show that $\alpha$ and $\bar \alpha$ have distinct invariants.

\p{One-cornered subcurves}  Let $\gamma$ be an oriented immersed closed curve in a surface with no self-tangencies.  A {\em one-cornered simple subcurve} of $\gamma$ is an oriented closed curve $\delta$ with the following two properties:
\vspace{-\topsep}
\begin{compactitem}
\item $\delta$ is obtained by starting at a self-intersection point $p$ of $\gamma$ and following
$\gamma$ until it returns for the first time to $p$, and 
\item $\delta$ is freely homotopic to a simple closed curve.
\end{compactitem}
\vspace{-\topsep}
The self-intersection point $p$ at which $\delta$ begins and ends is the {\em corner} of $\delta$.  

We can also specify a direction of turning for $\delta$.  Let $\vec{v}$
and $\vec{w}$ be the tangent vectors to $\delta$ at its starting and ending
points, so $\vec{v}$ and $\vec{w}$ form a basis for the tangent space to the
surface at the corner $p$ of $\delta$.  We will say that $\delta$ is {\em right-turning}
if $\left(\vec{v},\vec{w}\right)$ is a positively oriented basis for the tangent space
and that $\delta$ is {\em left-turning} otherwise.

\Figure{figure:fillingcurve}{FillingCurve}
{On the left is the oriented curve $\alpha$.  
On the right are the simple one-cornered subcurves of $\alpha$.  Each of them is
left-turning.}{90}

All of the one-cornered simple subcurves of the immersed closed curve $\alpha$
in the left-hand side of Figure~\ref{figure:fillingcurve} are shown in the right-hand side of
Figure~\ref{figure:fillingcurve}.  All of these one-cornered simple subcurves 
are left-turning. 

\p{The homotopy invariants} Let $\gamma$ be an oriented immersed closed curve with no self-tangencies.  We now use the notions of left- and right-turning one-cornered simple subcurves of  $\gamma$ in order to define an invariant of the homotopy class of $\gamma$.  Let
\[
\oc_L(\gamma) = \Set{$\left[\delta\right]$}{$\delta$ is a left-turning one-cornered simple subcurve of $\gamma$}
\]
and
\[
\oc_R(\gamma) = \Set{$\left[\delta\right]$}{$\delta$ is a right-turning one-cornered simple subcurve of $\gamma$}.
\]
For the curve $\alpha$ shown in Figure~\ref{figure:fillingcurve}, the set $\oc_L(\alpha)$ is the (nonempty) set of homotopy classes of curves shown in the right-hand side of Figure~\ref{figure:fillingcurve} and the set $\oc_R(\alpha)$ is empty.

The next lemma says that, at least under favorable circumstances, the sets $\oc_L(\gamma)$ and $\oc_R(\gamma)$ are invariant under homotopies of $\gamma$.  In its statement, we say that $\gamma$ is in {\em minimal position} if it is an immersed closed curve with no triple points and if it has the minimal possible number of self-intersections among all such curves that are homotopic to it.

\begin{lemma}
\label{lemma:invariance}
Let $\gamma$ and $\gamma'$ be homotopic oriented immersed closed curves
in minimal position.  Then $\oc_L(\gamma) = \oc_L(\gamma')$ and
$\oc_R(\gamma) = \oc_R(\gamma')$.
\end{lemma}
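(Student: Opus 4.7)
The plan is to reduce to a standard local move. By a theorem of Hass--Scott (``Shortening curves on surfaces''), any two homotopic oriented immersed closed curves in minimal position on $\Sigma$ are connected by a finite sequence of ambient isotopies and triangle moves (type III Reidemeister moves); type I and II moves do not occur because they would change the number of self-intersections. So it suffices to check that $\oc_L$ and $\oc_R$ are preserved by (a) an ambient isotopy and (b) a single triangle move. Case (a) is immediate: an isotopy of $\Sigma$ carries each self-intersection point $p$ of $\gamma$ to a self-intersection point $p'$ of $\gamma'$, carries the two loops at $p$ into freely homotopic loops at $p'$, and preserves the orientation of the ordered tangent basis $(\vec v, \vec w)$ at each corner.

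For case (b), a triangle move is a homotopy supported in a small disk $D \subset \Sigma$ in which three strands $a_1, a_2, a_3$ of $\gamma$ meet pairwise in three points $p_{ij} = a_i \cap a_j$; the move pushes one strand (say $a_1$) across the opposite vertex $p_{23}$, replacing the original triangle by a flipped triangle with new vertices $p'_{ij}$. Each self-intersection $p$ of $\gamma$ has a natural counterpart $p'$ in $\gamma'$: if $p \notin D$ set $p' = p$, and if $p = p_{ij}$ set $p' = p'_{ij}$. For such a corner, decompose $\gamma$ as a concatenation of two loops $\delta_1, \delta_2$ based at $p$ (one for each arc of $S^1$ cut out by the two preimages of $p$), and define $\delta'_1, \delta'_2$ analogously at $p'$. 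Tracking how the two preimages of $p_{ij}$ in $S^1$ vary continuously as the strands of $\gamma$ deform inside $D$ (they sweep through the preimages of the momentary triple point in the middle of the move), the triangle move restricts to a free homotopy from $\delta_k$ to $\delta'_k$ for $k = 1, 2$. Hence $[\delta_k] = [\delta'_k]$, so $\delta_k$ is freely homotopic to a simple closed curve if and only if $\delta'_k$ is. Finally, the tangent lines to $a_i$ and $a_j$ at $p'_{ij}$ are obtained from those at $p_{ij}$ by a small deformation through transverse intersections, so the ordered basis $(\vec v, \vec w)$ used to define turning direction has the same orientation at $p'_{ij}$ as at $p_{ij}$. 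Together these observations give $\oc_L(\gamma) = \oc_L(\gamma')$ and $\oc_R(\gamma) = \oc_R(\gamma')$.

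The main obstacle is the careful bookkeeping required to describe the triangle move precisely as a homotopy of the parametrized curve and to verify that the two preimages of each corner really do move continuously through the triple-point singularity in the middle of the move. There are several combinatorial possibilities for the ordering in which $\gamma$ traverses $a_1, a_2, a_3$ and for how the decomposition $\gamma = \delta_1 \cdot \delta_2$ crosses $\partial D$, but in every case the homotopy is supported inside $D$, fixes $\partial D$, and passes only through transverse (possibly triple) intersections, which is all the argument above needs.
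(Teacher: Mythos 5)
Your proof takes essentially the same route as the paper: both reduce via the Hass--Scott theorem (the paper cites Paterson \cite{Paterson} for the statement in the form ``ambient isotopies plus Reidemeister III moves'') and then verify invariance of $\oc_L$ and $\oc_R$ under a single triangle move. Your discussion of tracking corners, free homotopy classes, and the tangent basis simply fleshes out what the paper dismisses as a straightforward check, and it is correct.
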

\vspace{-\topsep}
\begin{proof}
Since $\gamma$ and $\gamma'$ are in minimal position, a theorem of Hass--Scott \cite{HassScottHomotopy} applies to show that $\gamma$ and $\gamma'$ can be connected by a sequence of ambient isotopies and the ``Reidemeister III moves'' shown in the left-hand side of Figure~\ref{figure:reidemeister} (we remark that Hass--Scott do not state their result in this form, but
Paterson \cite[Theorem 1.2]{Paterson} gives an alternate proof of their result and states it as we have).  It is thus enough to deal with the case where $\gamma$
and $\gamma'$ differ by a single Reidemeister III move.  This is a straightforward check.
\end{proof}

\Figure{figure:reidemeister}{Reidemeister}{On the left is the Reidemeister III move.  On the right are a monogon
and a bigon (both nonsingular).}{90}

\p{Checking minimal position} Lemma~\ref{lemma:invariance} is only as useful as our ability to check if a curve is in minimal position.  We recall here a theorem of Hass--Scott that gives a checkable criterion for a curve to be in minimal position, and use this to conclude that the curve $\alpha$ from Figure~\ref{figure:fillingcurve} is in minimal position (Lemma~\ref{lemma:minpos} below).  The statement of the criterion requires two definitions.  Let $\gamma$ be an immersed closed curve in $\Sigma$ with no self-tangencies. 

A \emph{singular monogon} in $\gamma$ is a subarc $\delta$ of $\gamma$ with the following two properties:
\vspace{-\topsep}
\begin{compactitem}
\item $\delta$ starts and ends at the same self-intersection point of $\gamma$ and
\item $\delta$ is nullhomotopic by a homotopy fixing its endpoints.
\end{compactitem}
\vspace{-\topsep}
A prototypical example of this is a nonsingular monogon, i.e.\ one
where $\delta$ is a simple closed curve in the surface that bounds a disc; see the right-hand side of Figure~\ref{figure:reidemeister}.

A \emph{singular bigon} in $\gamma$ is a pair of distinct subarcs $\delta$ and $\delta'$ of $\gamma$ with the
following three properties for some choice of orientation of $\delta$ and $\delta'$:
\vspace{-\topsep}
\begin{compactitem}
\item $\delta$ and $\delta'$ start at the same self-intersection point of $\gamma$, 
\item $\delta$ and $\delta'$ end at the same self-intersection point of $\gamma$, and
\item $\delta$ and $\delta'$ are homotopic by a homotopy that fixes their starting and ending points.
\end{compactitem}
\vspace{-\topsep}
A prototypical example of this is a nonsingular bigon, i.e.\ one where $\delta \cup \delta'$ is a simple closed curve in the surface that bounds a disc; see the right-hand side of Figure~\ref{figure:reidemeister}.

Hass--Scott proved that an immersed closed curve in a surface with no triple points and no self-tangencies 
is in minimal position if and only if it does not have any singular monogons or singular bigons; see Paterson \cite{Paterson} for an alternate proof.

The curve $\alpha$ from Figure~\ref{figure:fillingcurve} has no triple points and does not have any singular monogons or singular bigons.  We thus have the following lemma.

\begin{lemma}\label{lemma:minpos}
Let $\Sigma$ be a closed oriented surface of genus at least $2$ and let $\alpha$ be the oriented closed curve in $\Sigma$ shown in Figure~\ref{figure:fillingcurve}.  Then $\alpha$ is in minimal position.
\end{lemma}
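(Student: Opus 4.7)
The plan is to apply the Hass--Scott criterion stated immediately before the lemma: it suffices to check that $\alpha$ has no triple points, no self-tangencies, no singular monogons, and no singular bigons. The first two properties are evident from Figure~\ref{figure:fillingcurve}, so the real work is to rule out singular monogons and singular bigons.

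I would first reduce to the absence of \emph{nonsingular} monogons and bigons by a standard innermost-disk argument. If $\delta \subset \alpha$ were a singular monogon, it would bound a disk $D$; among the arcs of $\alpha$ meeting the interior of $D$, an innermost subdisk of $D$ is either a nonsingular monogon or a nonsingular bigon. The analogous argument applied to a singular bigon again produces either a nonsingular monogon or a nonsingular bigon. Nonsingular monogons and bigons correspond precisely to complementary regions of $\alpha$ in $\Sigma$ that are disks bounded by one edge, respectively two edges, of the graph cut out by $\alpha$.

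With this reduction in hand, the task becomes an inspection of the complementary regions of $\alpha$ directly from Figure~\ref{figure:fillingcurve}. The bounded complementary regions visible in the figure are finitely many and can be enumerated; one verifies that each is a polygon with at least three corners. The only remaining region is the one containing the part of $\Sigma$ outside the subsurface in which $\alpha$ is drawn; because $\Sigma$ has genus at least~$2$ and $\alpha$ lies in a proper subsurface, this region is not even a disk.

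The main obstacle, such as it is, is the case-by-case inspection of complementary regions from the figure. Since the figure displays $\alpha$ explicitly together with all of its self-intersections, this is a finite, routine combinatorial check and requires no genuinely new idea.
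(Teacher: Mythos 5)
Your overall frame---apply the Hass--Scott criterion stated before the lemma---matches the paper, but the reduction you make is exactly the step that fails. A singular monogon is only required to be a subarc $\delta$ from a crossing back to itself that is nullhomotopic rel endpoints; $\delta$ need not be embedded, and the nullhomotopy gives only a singular disk, not an embedded disk $D$ in which an innermost-disk argument can run. The same goes for singular bigons: $\delta \cup \delta'$ need not bound anything embedded. In fact the implication you want (singular monogon/bigon $\Rightarrow$ nonsingular monogon/bigon, equivalently a complementary region with one or two sides) is false in general for non-simple curves --- Hass and Scott give examples of curves with excess self-intersection containing no embedded monogon or bigon, and this is precisely why their criterion is phrased in terms of \emph{singular} monogons and bigons rather than the much easier embedded ones. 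So inspecting the complementary regions of $\alpha$ and seeing only polygons with at least three sides does not verify the hypothesis of the criterion. What actually has to be checked (and what the paper asserts directly, as a routine but genuinely homotopy-theoretic verification) is that no subarc of $\alpha$ beginning and ending at a crossing is nullhomotopic rel endpoints, and that no two subarcs joining the same pair of crossings are homotopic rel endpoints; this is typically done by lifting to the universal cover or by an explicit combinatorial/hyperbolic-geometry argument, not by listing complementary faces.

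A secondary error: your claim that $\alpha$ lies in a proper subsurface and hence has a non-disk complementary region contradicts the paper, which uses crucially (via Kra's theorem, in the proof of the main theorem) that $\alpha$ \emph{fills} $\Sigma$, i.e.\ that every complementary region is a disk. This does not affect the minimal-position question by itself, but it signals a misreading of the curve in Figure~\ref{figure:fillingcurve}.
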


\p{Finishing the proof} We now apply
Lemmas~\ref{lemma:invariance} and~\ref{lemma:minpos} to prove Lemma~\ref{lemma:noreverse}, which states that $[\alpha]$ and $[\bar \alpha]$ do not lie in the same $\Mod(\Sigma)$-orbit.

\begin{proof}[Proof of Lemma~\ref{lemma:noreverse}]
As in the statement, let $\alpha$ be the oriented immersed closed curve in the surface $\Sigma$ depicted in the left-hand side
of Figure~\ref{figure:fillingcurve}.  Let $\phi$ be an orientation-preserving diffeomorphism
of $\Sigma$ and $[\phi] \in \Mod(\Sigma)$ its mapping class.  We must prove that $[\phi(\alpha)] \neq [\bar \alpha]$.  

As discussed above, $\oc_L(\alpha) \neq \emptyset$ and $\oc_R(\alpha) = \emptyset$.  We thus have that
\[
\oc_L(\bar \alpha) =\oc_R(\alpha) = \emptyset
\]
and
\[
\oc_L\left(\phi\left(\alpha\right)\right) = [\phi] \left(\oc_L\left(\alpha\right) \right) \neq \emptyset.
\]
In particular, $\oc_L(\bar \alpha) \neq \oc_L\left(\phi\left(\alpha\right)\right)$.

By Lemma~\ref{lemma:minpos}, the curve $\alpha$ is in minimal position.
It follows that $\bar \alpha$ and $\phi(\alpha)$ are also in minimal position.  
By Lemma~\ref{lemma:invariance}, we have
$\left[\phi(\alpha)\right] \neq [\bar \alpha]$, as desired.
\end{proof}


\section{The proof of Theorem~\ref{theorem:main}}
\label{section:mainproof}

In this section, we prove Theorem~\ref{theorem:main}.  The proof involves the theory of stable commutator length, so we begin by recalling some basic facts about this.

\p{Stable commutator length} Let $G$ be a group.  For $g \in [G,G]$, define the \emph{commutator length} 
$\cl(g)$ 
to be the minimal number of commutators needed to express $g$.  The function
\[\cl\colon [G,G] \rightarrow \Z_{\geq 0}\]
is subadditive in the sense that
\[
\cl(gh) \leq \cl(g) + \cl(h)
\]
for all $g,h \in [G,G]$.
The \emph{stable commutator length} of $g \in [G,G]$ is the real number
\[\scl(g) = \lim_{n \to \infty} \frac{1}{n} \cl(g^n).\]
The subadditivity implies that this limit exists.  See Calegari's book \cite{CalegariBook} for
a survey of stable commutator length.

\p{A criterion for a Cayley graph to have infinite diameter} Theorem~\ref{theorem:main} states that certain Cayley graphs for $\pi_1(\Sigma)$ have infinite diameter.  Since
$[\pi_1(\Sigma),\pi_1(\Sigma)] \neq \pi_1(\Sigma)$, we cannot apply the theory of
stable commutator length directly to prove this theorem.  Instead, we will apply
the following lemma.

\begin{lemma}
\label{lemma:cayley}
Let $G$ be a group and let $\scl\colon [G,G] \to \R$ be its stable commutator length
function.  Let $H$ be a normal subgroup of $G$ such that $H < [G,G]$.  The group $G$
acts on $H$ by conjugation; let $S$ be a generating set for $H$ that is contained
in finitely many $G$-orbits.  Assume that the Cayley graph of $H$ with respect
to $S$ has finite diameter.  Then $\scl(h) = 0$ for all $h \in H$.
\end{lemma}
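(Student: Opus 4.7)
The plan is to show that $\cl(h^k)$ stays bounded in $k$ for every $h \in H$, whence dividing by $k$ and taking the limit immediately gives $\scl(h)=0$.

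First I would exploit the hypothesis that $S$ is contained in finitely many $G$-orbits. Since $H < [G,G]$, every $s \in S$ already lies in $[G,G]$, so $\cl(s)$ is a well-defined non-negative integer. I would check the familiar fact that commutator length in $G$ is invariant under $G$-conjugation: if $s = [a_1,b_1]\cdots[a_n,b_n]$, then $g s g^{-1} = [ga_1g^{-1},gb_1g^{-1}]\cdots[ga_ng^{-1},gb_ng^{-1}]$, so $\cl(gsg^{-1}) \le \cl(s)$, and equality follows by symmetry. Because $S$ meets only finitely many $G$-conjugacy orbits and $\cl$ is constant on each such orbit, the quantity
\[
M := \sup_{s \in S} \cl(s)
\]
is finite. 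The same bound holds on $S^{-1}$ since $\cl(s^{-1}) = \cl(s)$.

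Next I would use the finite-diameter hypothesis. Let $D$ denote the diameter of the Cayley graph of $H$ with respect to $S$. Then every element of $H$, and in particular $h^k$ for each $k \ge 1$, can be written as a product $h^k = s_1 s_2 \cdots s_n$ with $s_i \in S \cup S^{-1}$ and $n \le D$. By the subadditivity of $\cl$ recalled at the start of the section,
\[
\cl(h^k) \;\le\; \sum_{i=1}^{n} \cl(s_i) \;\le\; DM.
\]

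Finally I would take the limit in the definition of stable commutator length:
\[
\scl(h) \;=\; \lim_{k \to \infty} \frac{\cl(h^k)}{k} \;\le\; \lim_{k \to \infty} \frac{DM}{k} \;=\; 0,
\]
and since $\scl$ is always non-negative, $\scl(h) = 0$. The only delicate point is arranging the uniform bound $M$ on $\cl|_S$; this is where the hypothesis that $S$ lies in finitely many $G$-orbits (as opposed to $H$-orbits) is essential, since $\cl$ is computed in $G$ and is $G$-conjugation invariant. Everything else is a one-line application of subadditivity and the diameter hypothesis.
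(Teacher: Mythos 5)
Your proof is correct and follows essentially the same route as the paper: a uniform bound on $\cl$ over $S \cup S^{-1}$ via $G$-conjugation invariance and the finitely-many-orbits hypothesis, a uniform bound on word length from the finite diameter, subadditivity of $\cl$, and then dividing by $k$ in the definition of $\scl$. No gaps.
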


We emphasize that in the statement of Lemma \ref{lemma:cayley}  the expression $\scl(h)$ refers
to the stable commutator length of $h$ as an element of $G$.

\begin{proof}[Proof of Lemma \ref{lemma:cayley}]
Since the Cayley graph of $H$ with respect to $S$ has finite diameter, there exists
some $C \geq 0$ such that every element of $H$ can be written as a product of
at most $C$ elements of $S \cup S^{-1}$.  Let $\cl\colon [G,G] \rightarrow \Z$ be
the commutator length function and define
\[D = \sup\Set{$\cl(s)$}{$s \in S \cup S^{-1}$}.\]
Since the commutator length function on $[G,G]$ is invariant under the
conjugation action of $G$ and $S$ is contained in finitely many $G$-orbits, 
we have $D < \infty$.  For all $h \in H$, we then have
\[
\cl(h) \leq C D
\]
and thus
\[
\scl(h) = \lim_{n \mapsto \infty} \frac{1}{n} \cl(h^n) \leq \lim_{n \mapsto \infty} \frac{1}{n} C D = 0.\qedhere
\]
\end{proof}

\p{Surface subgroups of mapping class groups}
Let $\Sigma$ be a closed oriented surface of genus at least $2$.  The group
$\pi_1(\Sigma)$ can be embedded into $\Mod(\Sigma,\ast)$ as the {\em point-pushing
subgroup}; see \cite[\S 4.2]{FarbMargalitPrimer}.  This point-pushing subgroup
is the kernel of the natural surjective map $\Mod(\Sigma,\ast) \rightarrow \Mod(\Sigma)$,
so we have a short exact sequence
\[
1 \to \pi_1(\Sigma) \to \Mod(\Sigma,\ast) \to \Mod(\Sigma) \to 1
\]
called the Birman exact sequence.  

\begin{remark}
There is one annoying technical issue here.  In $\pi_1(\Sigma)$ we multiply paths
from left to right, but in $\Mod(\Sigma,\ast)$ we compose mapping classes from right
to left.  The naive inclusion $\pi_1(\Sigma) \rightarrow \Mod(\Sigma,\ast)$
is thus an anti-homomorphism rather than a homomorphism.  To fix this, we compose
the inclusion $\pi_1(\Sigma) \rightarrow \Mod(\Sigma,\ast)$ with the inversion
map on $\pi_1(\Sigma)$, so $x \in \pi_1(\Sigma)$ corresponds to the point-pushing
mapping class that pushes $\ast$ around $x^{-1}$.
\end{remark}

We will need the following two properties of the point-pushing subgroup:
\vspace{-\topsep}
\begin{compactitem}
\item the conjugation action of $\Mod(\Sigma,\ast)$ on the normal subgroup $\pi_1(\Sigma)$ is the natural one induced by the action of diffeomorphisms on $\Sigma$ and 
\item $\pi_1(\Sigma)$ lies in the commutator subgroup of $\Mod(\Sigma,\ast)$.
\end{compactitem}
\vspace{-\topsep}
The second property is vacuous when the genus of $\Sigma$ is at least $3$ since in these cases $\Mod(\Sigma,\ast)$ is perfect.  For the case of genus $2$, we observe that $\pi_1(\Sigma)$ is generated by nonseparating simple loops and thus the point-pushing subgroup is generated by products $T_a T_b^{-1}$, where $T_a$ and $T_b$ are Dehn twists about nonseparating curves in $\Sigma$; see \cite[\S 4.2]{FarbMargalitPrimer}.  Since all Dehn twists about nonseparating curves are conjugate in $\Mod(\Sigma,\ast)$, it follows that $T_a T_b^{-1}$ (and hence all of the point pushing subgroup) lies in the commutator subgroup of $\Mod(\Sigma,\ast)$.

\p{Finishing the proof} We are now ready to prove our main theorem.  The proof will bring to bear the Nielsen--Thurston classification, which classifies elements of the mapping class group as either periodic, reducible, or pseudo-Anosov; see \cite[\S 13.3]{FarbMargalitPrimer} for details and background.

\begin{proof}[Proof of Theorem~\ref{theorem:main}]
By Lemma~\ref{lemma:cayley}, it is enough to exhibit a single element of $\pi_1(\Sigma)$ whose $\Mod(\Sigma,\ast)$-stable commutator length is nonzero.
Let $x \in \pi_1(\Sigma)$ be any element freely homotopic to the curve $\alpha$ from Lemma~\ref{lemma:noreverse}.  The curve $x$ fills $\Sigma$, meaning that it has a minimal position representative where all of the complementary regions are disks.  In fact we may choose the base point $\ast$ and the element $x$ such that the minimal position representative is $\alpha$.  By a theorem of Kra \cite{Kra}, the associated element of $\Mod(\Sigma,\ast)$ is pseudo-Anosov (see also \cite[\S 14.1.4]{FarbMargalitPrimer}).

By Lemma~\ref{lemma:noreverse}, there is no $f \in \Mod(\Sigma,\ast)$ such that 
$f_{\ast}(x) = x^{-1}$.  Since surface groups have unique roots (see, e.g.\ 
\cite[Problem 1.20 \& Theorem 3.11]{ClayRolfsen}), it then follows that for each $n \geq 1$ there does not exist an $f \in \Mod(\Sigma,\ast)$ with $f_{\ast}(x^n) = x^{-n}$.
In other words, no positive power of the mapping class associated to $x$ is
conjugate in $\Mod(\Sigma,\ast)$ to its inverse.
A theorem of Calegari--Fujiwara \cite[Theorem C]{CalegariFujiwara} states that if a pseudo-Anosov element of $\Mod(\Sigma,\ast)$ has no positive power that is conjugate to its inverse, then its stable commutator length is nonzero.  In particular the stable commutator length of $x \in \Mod(\Sigma,\ast)$ is nonzero, as desired.  
\end{proof}

\begin{remark}
The theorem of Calegari--Fujiwara used in the proof above was later generalized
by Bestvina--Bromberg--Fujiwara \cite{BestvinaBrombergFujiwara} 
to give a complete characterization of which
mapping classes have positive stable commutator length.
\end{remark}

\section{Deriving the general case from the special case}
\label{section:remark}

Neither our proof nor Calegari's original proof of Theorem \ref{theorem:main} appear to simplify
in the case where the generating set $S$ is the set of all simple closed curves in $\pi_1(\Sigma)$.
However, it is interesting to observe that the general case follows from this one.  Indeed, assume
that the Cayley graph of $\pi_1(\Sigma)$ with respect to $S$ has infinite diameter, and consider
any generating set $S' \subset \pi_1(\Sigma)$ that is contained in finitely many $\Mod(\Sigma,\ast)$-orbits.
We will show how to deduce that the Cayley graph of $\pi_1(\Sigma)$ with respect to $S'$ has infinite diameter.
Enlarging $S'$ can only shrink the diameter of the Cayley graph, so without loss of generality we
can assume that $S' = \Mod(\Sigma,\ast) \cdot S_0'$ for some finite generating set $S_0'$ for $\pi_1(\Sigma)$.  For
later use, let $S_0 \subset S$ also be a finite generating set for $\pi_1(\Sigma)$ such that $S = \Mod(\Sigma,\ast) \cdot S_0$.

It is enough now to prove that the Cayley graphs of $\pi_1(\Sigma)$ with respect to $S$ and
$S'$ are quasi-isometric.  
Letting $\|\cdot\|_S$ and $\|\cdot\|_{S'}$ be the associated word norms on $\pi_1(\Sigma)$, this is equivalent to showing
that there exists some $C,C' \geq 1$ such that
\[\|w\|_{S'} \leq C \|w\|_{S} \quad \text{and} \quad \|w\|_{S} \leq C' \|w\|_{S'}\]
for all $w \in \pi_1(\Sigma)$.  The proofs of the existence of $C$ and $C'$ are the same, except with the roles of $\{S,S_0\}$ and $\{S',S_0'\}$ reversed; we will write the argument for $C$.

Since $S_0$ is a finite generating set for $\pi_1(\Sigma)$, there exists some $C \geq 1$ such that
$\|s\|_{S'} \leq C$ for all $s \in S_0$.  For $\phi \in \Mod(\Sigma)$ and $s \in S_0$, we thus have
\[\|\phi(s)\|_{S'} = \|s\|_{\phi^{-1}(S')} = \|s\|_{S'} \leq C;\]
here we are using the fact that $S'$ is $\Mod(\Sigma)$-invariant.
Since $S = \Mod(\Sigma) \cdot S_0$, this implies that $\|s\|_{S'} \leq C$ for all $s \in S$.  From
this, we see that $\|w\|_{S'} \leq C \|w\|_{S}$ for all $w \in \pi_1(\Sigma)$, as desired.

\begin{footnotesize}
\noindent
\begin{tabular*}{\linewidth}[t]{@{}p{\widthof{Department of Mathematics}+0.5in}@{}p{\linewidth - \widthof{Department of Mathematics} - 0.5in}@{}}
{\raggedright
Dan Margalit\\
Georgia Institute of Technology\\
School of Mathematics\\
686 Cherry St\\ 
Atlanta, GA 30306\\
{\tt margalit@math.gatech.edu}}
&
{\raggedright
Andrew Putman\\
Department of Mathematics\\
University of Notre Dame \\
255 Hurley Hall\\
Notre Dame, IN 46556\\
{\tt andyp@nd.edu}}
\end{tabular*}\hfill
\end{footnotesize}


\begin{thebibliography}{BraMarc}
\begin{footnotesize}
\setlength{\itemsep}{-1mm}

\bibitem[BST]{BardakovShpilrainTolstykh}
V. Bardakov, V. Shpilrain\ and\ V. Tolstykh, On the palindromic and primitive widths of a free group, J. Algebra 285 (2005), no.~2, 574--585. 

\bibitem[BBF]{BestvinaBrombergFujiwara}
M. Bestvina, K. Bromberg\ and\ K. Fujiwara, Stable commutator length on mapping class groups, Ann. Inst. Fourier (Grenoble) 66 (2016), no.~3, 871--898.

\bibitem[BM]{BM}
M. Brandenbursky\ and\ M. Marcinkowski, Aut-invariant norms and Aut-invariant quasimorphisms on free and surface group,
to appear in Comment.\ Math.\ Helv.

\bibitem[C1]{Calegari}
D. Calegari, Word length in surface groups with characteristic generating sets, Proc. Amer. Math. Soc. 136 (2008), no.~7, 2631--2637.

\bibitem[C2]{CalegariBook}
D. Calegari, {\it scl}, MSJ Memoirs, 20, Mathematical Society of Japan, Tokyo, 2009.

\bibitem[CF]{CalegariFujiwara}
D. Calegari\ and\ K. Fujiwara, Stable commutator length in word-hyperbolic groups, Groups Geom. Dyn. 4 (2010), no.~1, 59--90.

\bibitem[CR]{ClayRolfsen}
A. Clay\ and\ D. Rolfsen, {\it Ordered groups and topology}, Graduate Studies in Mathematics, 176, American Mathematical Society, Providence, RI, 2016.

\bibitem[FM]{FarbMargalitPrimer}
B. Farb\ and\ D. Margalit, {\it A primer on mapping class groups}, Princeton Mathematical Series, 49, Princeton University Press, Princeton, NJ, 2012.

\bibitem[HS1]{HassScottIntersect}
J. Hass\ and\ P. Scott, Intersections of curves on surfaces, Israel J. Math. 51 (1985), no.~1-2, 90--120.

\bibitem[HS2]{HassScottHomotopy}
J. Hass\ and\ P. Scott, Shortening curves on surfaces, Topology 33 (1994), no.~1, 25--43.

\bibitem[K]{Kra}
I. Kra, On the Nielsen--Thurston--Bers type of some self-maps of Riemann surfaces, Acta Math. 146 (1981), no.~3-4, 231--270.

\bibitem[Pa]{Paterson}
J. M. Paterson, A combinatorial algorithm for immersed loops in surfaces, Topology Appl. 123 (2002), no.~2, 205--234. 

\bibitem[Pu]{PutmanInfinite}
A. Putman, An infinite presentation of the Torelli group, Geom. Funct. Anal. 19 (2009), no.~2, 591--643.

\end{footnotesize}
\end{thebibliography}
\end{document}